\long\def\@makefntext#1{\noindent #1}
\newskip\tabcentering \tabcentering=1000pt plus 1000pt minus 1000pt
\def\REF#1{\par\hangindent\parindent\indent\llap{#1\enspace}\ignorespaces} 
\def\MCH#1#2{\setbox0=\hbox{\raise#1\hbox{#2}}\smash{\box0}}
\def\@evenfoot{}\def\@oddfoot{}
\def\@evenhead{\hbox to\textwidth{\footnotesize\rm\thepage \hfill
{\it Cheng Zhiyun, Gao Hongzhu}}} 
\def\@oddhead{\hbox to \textwidth{\footnotesize{\it
On Region Crossing Change and Incidence Matrix} \hfill\thepage}}
\def\bc{\begin{center}}
\def\ec{\end{center}}
\def\no{\noindent}
\def\hang{\hangindent\parindent}
\def\textindent#1{\indent\llap{\qquad #1\ \ \enspace}\ignorespaces}
\def\ref{\par\hang\textindent}
\newtheorem{theorem}{Theorem}[section]
\newtheorem{lemma}[theorem]{Lemma}
\newtheorem{corollary}[theorem]{Corollary}
\newtheorem{proposition}[theorem]{Proposition}
\begin{document}
\abovedisplayskip=6pt plus 1pt minus 1pt \belowdisplayskip=6pt
plus 1pt minus 1pt
\thispagestyle{empty} \vspace*{-1.0truecm} \noindent
\vskip 10mm

\bc{\large\bf On Region Crossing Change and Incidence Matrix\\[2mm]
\footnotetext{\footnotesize The authors are supported by NSF 10671018 and Scientific Research Foundation of Beijng Normal University}} \ec

\vskip 5mm
\bc{\bf Cheng Zhiyun\, Gao Hongzhu\\
{\small School of Mathematical Sciences, Beijing Normal University
\\Laboratory of Mathematics and Complex Systems, Ministry of
Education, Beijing 100875, China
\\(email: czy@mail.bnu.edu.cn; hzgao@bnu.edu.cn)}}\ec

\vskip 1 mm

\noindent{\small {\small\bf Abstract} In a recent work of Ayaka Shimizu$^{[5]}$, she defined an operation named region crossing change on link diagrams, and showed that region crossing change is an unknotting operation for knot diagrams. In this paper, we prove that region crossing change on a 2-component link diagram is an unknotting operation if and only if the linking number of the diagram is even.

Besides, we define an incidence matrix of a link diagram via its signed planar graph and its dual graph. By studying the relation between region crossing change and incidence matrix, we prove that a signed planar graph represents an $n$-component link diagram if and only if the rank of the associated incidence matrix equals to $c-n+1$, here $c$ denotes the size of the graph.
\ \

\vspace{1mm}\baselineskip 12pt

\no{\small\bf Keywords} region crossing change; incidence matrix \ \

\no{\small\bf MR(2000) Subject Classification} 57M25 05C50\ \ {\rm }}

\vskip 1 mm


\vspace{1mm}\baselineskip 12pt



\section{Introduction}
In knot theory, unknotting operation is an interesting and important research topic. Generally speaking, an \textit{unknotting operation} is a local move on knot diagrams such that one can deform any knot into a trivial knot by some such local moves. The ordinary unknotting operation is crossing change. In [2], Hitoshi Murakami defined $\sharp$-operation and proved that $\sharp$-operation is an unknotting operation. Later, $\triangle$-operation was defined in [3] and it was proved that $\triangle$-operation is also an unknotting operation. In 1990, Hoste, Nakanishi and Taniyama defined $H(2)$-move in [1] and proved that $H(2)$-move is a kind of unknotting operation.
\begin{center}
\includegraphics{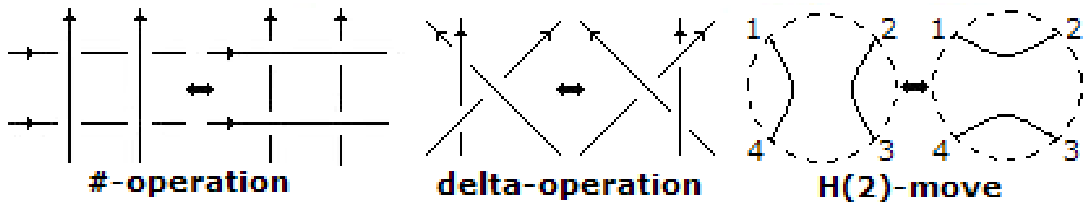} \centerline{\small Figure
1\quad}
\end{center}

Recently, Ayaka Shimizu posted a paper in which a new operation on link diagrams called region crossing change was defined. Here a \textit{region crossing change} at a region of $R^2$ divided by a link diagram is defined to be the crossing changes at all the crossing points on the boundary of the region. The figure below shows the effect of region crossing change on the gray region:
\begin{center}
\includegraphics{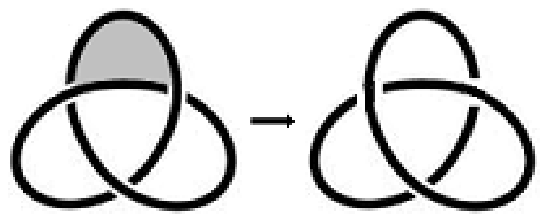} \centerline{\small Figure
2\quad}
\end{center}
The main result in [5] is that region crossing change on knot diagram is an unknotting operation. In fact, the author proved that:
\begin{theorem}$^{[5]}$Let $D$ be a knot diagram and $P$ a crossing point of $D$, then there exist region crossing changes which transform $D$ into a new knot diagram $D'$, here $D'$ is obtained from $D$ by a crossing change at $P$.
\end{theorem}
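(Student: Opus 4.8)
The plan is to translate the problem into linear algebra over $\mathbb{F}_2$ and then to identify the obstruction as a classical graph invariant that vanishes precisely for knots. Since two region crossing changes at the same region cancel and changes at different regions commute, performing a collection of region crossing changes amounts to choosing a subset $S$ of the $c+2$ regions of $D$. Writing $X$ for the set of crossings and, for each region $R$, letting $v_R \in \mathbb{F}_2^X$ be the indicator vector of the crossings lying on $\partial R$, the net effect of the moves in $S$ is $\sum_{R\in S} v_R$. Thus Theorem~1.1 is equivalent to the assertion that every standard basis vector $e_P$ lies in $W := \mathrm{span}_{\mathbb{F}_2}\{v_R\}$; that is, $W = \mathbb{F}_2^X$. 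First I would record this reformulation and observe that it suffices to show the $c\times(c+2)$ incidence matrix of the $v_R$ has full rank $c$.

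Next I would exploit the checkerboard coloring of the regions into black and white, using that every crossing has exactly two black and two white corners. This at once produces two relations, $\sum_{R\ \mathrm{black}} v_R = 0$ and $\sum_{R\ \mathrm{white}} v_R = 0$, so $\dim W \le c$; the entire content is that there are no further relations. To isolate the remaining obstruction I would pass to the black Tait graph $G_B$, whose vertices are the black regions and whose edges are the crossings (the two black corners at a crossing name its endpoints), together with its planar dual, the white Tait graph. Under this dictionary a vector $w\in\mathbb{F}_2^X=\mathbb{F}_2^{E(G_B)}$ is even on every black region exactly when $w$ is even at every vertex of $G_B$ (a cycle), and even on every white region exactly when $w$ is even around every face (a cocycle). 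Hence $W^\perp$ is the bicycle space (cycle space $\cap$ cut space) of $G_B$, and the theorem reduces to showing this space is trivial.

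The crux, and the step I expect to be hardest, is proving that the bicycle space of $G_B$ vanishes for a knot; this is exactly where single-componentness must enter, since the analogous statement fails for some two-component links, consistent with the even-linking-number restriction announced in the abstract. I would establish the sharper fact that $\dim(\text{bicycle space of } G_B) = n-1$, where $n$ is the number of components of the link represented by $D$: the bicycle space is naturally identified with the left-right (straight-ahead) walks in the $4$-valent diagram graph, which are precisely the components of the link. For a knot $n=1$, so the bicycle space is $0$, giving $W^\perp=0$ and $W=\mathbb{F}_2^X$, which proves the theorem; this also recovers the rank formula $c-n+1$ advertised for the general incidence matrix. The main technical nuisance to dispatch carefully is the case in which a single region meets a crossing at two opposite corners, which appears as a loop in $G_B$ and slightly distorts the naive ``even on every region'' bookkeeping; I would treat these loops separately before invoking the cycle/cocycle identification.
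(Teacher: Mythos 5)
Your reduction is correct and, in fact, is exactly the dictionary the paper itself sets up in Section~3: Theorem~1.1 is equivalent to the statement that the $(c+2)\times c$ incidence matrix $M(D)$ has $\mathbb{Z}_2$-rank $c$, i.e.\ that the span $W$ of the region vectors is all of $\mathbb{F}_2^X$. But note the logical direction in the paper: Theorem~1.1 is not proved here at all --- it is imported from Shimizu~[5], where the argument is a direct diagrammatic induction on knot diagrams --- and the paper then \emph{uses} Theorem~1.1 to prove that the rank is $c$ (Proposition~3.1) and, more generally, $c-n+1$ (Theorem~1.4). You are proposing to run this implication backwards, so nothing in the paper is available to you without circularity; everything rests on your independent proof that $W^\perp$ (the bicycle space of the Tait graph $G_B$, modulo the loop bookkeeping you rightly flag) is trivial for a knot diagram.

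That is where the gap is. Your justification for $\dim(\text{bicycle space})=n-1$ is that the bicycle space is ``naturally identified with the left-right (straight-ahead) walks,'' but this conflates two statements of very different difficulty. That the link components are the straight-ahead closed walks of the medial graph of $G_B$, i.e.\ the left-right paths of $G_B$, is essentially definitional. That the \emph{number} of left-right paths of a connected plane graph equals the dimension of its bicycle space plus one is Shank's theorem (see also Godsil--Royle, ``Link components and bicycles''), a genuinely nontrivial result whose proof requires the principal tripartition of edges or an equivalent device; it is not an ``identification'' one can read off, and it is at least as deep as the statement you are trying to prove --- indeed it is equivalent to the paper's main Theorem~1.4. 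If you cite Shank's theorem as known, your argument closes and gives a clean alternative proof of Theorem~1.1 (and of Theorem~1.4 simultaneously); as written, you have reduced the theorem to an unproved assertion that carries all of the content. Two smaller imprecisions: the relation $\sum_{R\ \mathrm{black}}v_R=0$ fails at nugatory crossings (a loop contributes once, not twice, to the region's boundary), so it holds only for reduced diagrams; and for $2$-component links the bicycle space is one-dimensional for \emph{every} diagram, regardless of linking number --- the parity of $lk$ governs whether a particular target vector lies in the codimension-one subspace $W$, not whether $W^\perp$ vanishes.
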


Similar to [5], given a link diagram $D$, and a region $R$ of $D$, we use $D(R)$ to denote the new diagram obtained from $D$ by a region crossing change at $R$. Obviously $(D(R_1))(R_2)=(D(R_2))(R_1)$ and $(D(R_1))(R_1)=D$. Hence $D(R_1\cup R_2)$ makes sense, and from now on we simply write $D(R_1R_2)$ for $D(R_1\cup R_2)$. If we use $R_1, \cdots, R_n$ to denote all the regions of $D$, obviously there are $2^n$ different cases of regions crossing changes totally. The theorem above can be described as for any crossing point $P$ there exists a subset $\alpha \subset \{1, \cdots, n\}$ such that $D(R_\alpha)$ is obtained from $D$ by a crossing change at $P$, here $D(R_\alpha)$ denotes $D(\bigcup\limits_{i\in \alpha} R_i)$.

In [5], the author gave an example of the standard diagram of Hopf link to explain that in general region crossing change is not an unknotting operation for links. An interesting question is that when region crossing change is an unknotting operation for link diagrams. In the first part of this paper, we will give an answer to this question for 2-component link diagrams. We prove that:
\begin{theorem}Given a 2-component link $L=K_1\cup K_2$, let $D$ be a diagram of $L$, region crossing change on $D$ is an unknotting operation if and only if $lk(K_1, K_2)$ is even.
\end{theorem}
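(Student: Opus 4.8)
The plan is to encode region crossing change as linear algebra over $\mathbb{Z}/2$. Label the crossings $p_1,\dots,p_n$ and record a choice of ``which crossings to switch'' as a vector in $(\mathbb{Z}/2)^n$. A region crossing change at a single region $R$ switches exactly the crossings lying on $\partial R$, and since switching a crossing twice has no net effect, performing region crossing changes at a set of regions switches each crossing according to the parity of how many chosen regions border it. Hence the set of switch patterns achievable by region crossing changes is precisely the $\mathbb{Z}/2$-row space $V$ of the region--crossing incidence matrix $M$ (rows indexed by regions, columns by crossings, entry $1$ when the crossing lies on the region's boundary). Separating the crossings into self-crossings (of $K_1$ or of $K_2$) and mixed crossings (between $K_1$ and $K_2$), let $u\in(\mathbb{Z}/2)^n$ be the indicator of the mixed crossings. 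The basic observation is that switching a crossing changes $lk(K_1,K_2)$ by $\pm 1$ exactly when it is mixed, so a switch pattern $w$ preserves the parity of $lk(K_1,K_2)$ iff $w\in u^{\perp}$. The theorem will follow once I show $V=u^{\perp}$.

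For the ``only if'' direction I would prove $V\subseteq u^{\perp}$, i.e. that the boundary of every region contains an even number of mixed crossings. Traversing $\partial R$ as a closed edge-walk and coloring each edge by the component containing it, the color can change only when the walk turns at a mixed crossing, since at a self-crossing all four local edge-germs lie on the same component; as the number of color changes around a closed walk is even, the number of mixed corners of $R$ is even. The one gap is that a single crossing might be a corner of $R$ twice: using the checkerboard coloring of the plane by the mod-$2$ winding number of $K_2$, the two regions on a given side of $K_2$ at a mixed crossing lie on opposite sides of one edge of $K_1$, so $R$ could meet a mixed crossing in two corners only along an edge having $R$ on both of its sides; because the $4$-valent diagram graph is Eulerian, hence bridgeless, no such edge exists, and corners and distinct crossings are counted alike. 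Thus $V\subseteq u^{\perp}$, the parity of $lk$ is a region-crossing-change invariant, and since any diagram of the trivial link has $lk=0$, region crossing change cannot be an unknotting operation when $lk(K_1,K_2)$ is odd.

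For the ``if'' direction I would establish the reverse inclusion $u^{\perp}\subseteq V$, for which it suffices to realize the generators of $u^{\perp}$: a switch at any single self-crossing, and a simultaneous switch at any prescribed pair of mixed crossings. Granting this, suppose $lk(K_1,K_2)$ is even. Classically there is a switch pattern $v_0$ turning $D$ into a descending, hence trivial, link diagram on the same shadow; because the resulting diagram has $lk=0$ and switching a mixed crossing toggles the parity of $lk$, the pattern $v_0$ switches an even number of mixed crossings, i.e. $v_0\in u^{\perp}=V$. Hence $v_0$ is achievable by region crossing changes, so region crossing change is an unknotting operation.

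The main obstacle is precisely the reverse inclusion $u^{\perp}\subseteq V$, namely realizing single self-crossing switches and mixed-crossing-pair switches by region crossing changes; this is the $2$-component analogue of Shimizu's Theorem 1.1, and I would attack it by adapting her region-selection argument, taking care that the second component subdivides the regions and that the paired mixed switches can be produced along an arc of $K_2$ joining the two crossings. I expect this construction, rather than the parity bookkeeping of the other direction, to carry the real content of the proof.
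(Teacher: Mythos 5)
Your overall architecture is the same as the paper's: the ``only if'' direction rests on the fact that the boundary of every region contains an even number of mixed crossings (the paper's Lemma 3.2, which makes the parity of $lk(K_1,K_2)$ invariant under region crossing changes), and the ``if'' direction rests on realizing, by region crossing changes, a switch at any single self-crossing and a simultaneous switch at any pair of mixed crossings (the paper's Proposition 2.1), combined with the observation that any trivializing switch pattern must toggle an even number of mixed crossings. Your $V=u^{\perp}$ packaging is a clean way to say this, and your treatment of the ``only if'' half is actually more careful than the paper's: you explicitly rule out a crossing appearing twice as a corner of one region via bridgelessness of the Eulerian $4$-valent graph, a point the paper's Lemma 3.2 glosses over.

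The genuine gap is that the inclusion $u^{\perp}\subseteq V$ --- which you yourself flag as carrying ``the real content'' --- is never proved; ``adapting her region-selection argument'' is a plan, not an argument, and the adaptation is not routine. The paper's Proposition 2.1 needs two nontrivial constructions. For a self-crossing $P$ of $K_1$, one applies Shimizu's theorem to the subdiagram $K_1$ alone, but the selected regions of $K_1$ are unions of regions of $D$ and may contain nugatory self-crossings of $K_2$; the paper must run an explicit nested recoloring algorithm on the ``reducible parts'' of those nugatory crossings so that the region crossing changes in $D$ alter exactly the intended crossings of $K_1\cap K_1$ and nothing of $K_2\cap K_2$ or $K_1\cap K_2$. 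For a pair of mixed crossings $P,Q$, the paper smooths $D$ at $P$ to obtain a knot diagram, applies Theorem 1.1 to switch $Q$ there, and then performs a case analysis on whether the chosen region set, pulled back to $D$, also toggles $P$ (taking a symmetric difference of the two region sets obtained from smoothing at $P$ and at $Q$ in the residual case). Neither of these steps appears in your proposal, so as written the ``if'' direction is not established.
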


For links with more than two components, we have a sufficient condition as follows:
\begin{theorem}Given an $n$-component link $L=K_1\cup \cdots \cup K_n$, if $lk(K_i, K_j)$ are all even for $1\leq i< j\leq n$, then region crossing change is an unknotting operation for any diagram of $L$.
\end{theorem}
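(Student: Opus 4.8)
The plan is to reduce the statement to ordinary crossing changes, which are already known to form an unknotting operation, and then to pin down exactly which crossing changes region crossing changes can realize. Fix a diagram $D$ of $L$ with crossings $p_1,\dots,p_c$ and work over $\mathbb{Z}/2\mathbb{Z}$. I encode a crossing change performed at a subset of crossings by its indicator vector in $(\mathbb{Z}/2\mathbb{Z})^c$; a region crossing change at $R$ then corresponds to the vector $\partial R$ recording the crossings lying on the boundary of $R$. Let $V\subseteq(\mathbb{Z}/2\mathbb{Z})^c$ be the subspace spanned by all the $\partial R$, i.e. the set of crossing changes realizable by region crossing changes. Since crossing change is an unknotting operation, there is a toggle vector $t$ for which the crossing changes indicated by $t$ turn $D$ into a diagram of the trivial link; region crossing change will be an unknotting operation for $D$ precisely when some such trivializing $t$ lies in $V$.

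Next I would isolate the obstruction as a family of $\mathbb{Z}/2\mathbb{Z}$–invariants. The geometric heart is the parity lemma: for every region $R$ and every component $K_i$, the number of crossings on $\partial R$ at which $K_i$ meets a different component is even. To see this, traverse the closed curve $\partial R$ and record along each boundary edge whether it belongs to $K_i$; this label changes exactly at the crossings where one incident strand is $K_i$ and the other is not, and a closed traversal forces an even number of changes. Hence each region crossing change toggles an even number of crossings incident to $K_i$ and some other component, so it preserves $\lambda_i:=\sum_{j\ne i} lk(K_i,K_j)\bmod 2$. Since a crossing change at a $K_i$–$K_j$ crossing alters $lk(K_i,K_j)$ by $\pm1$, each $\lambda_i$ is an affine-linear function of the toggle vector, and $V$ is contained in the subspace $W$ on which every $\lambda_i$ is unchanged. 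The hypothesis that all $lk(K_i,K_j)$ are even gives $\lambda_i(D)=0$ for every $i$, which is exactly the compatibility with the trivial link that will be needed.

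With these invariants the argument closes as follows. Any trivializing vector $t$ yields a diagram with all linking numbers $0$, hence with every $\lambda_i=0$; as $\lambda_i(D)=0$ too and $\lambda_i$ is affine-linear, $t$ changes no $\lambda_i$, i.e. $t\in W$. So it suffices to prove $V=W$: every toggle vector preserving all the $\lambda_i$ is realizable by region crossing changes. Since $V\subseteq W$ is already in hand, the content is the reverse inclusion, equivalently the rank equality $\dim V=c-n+1$ (the defect $n-1$ reflecting the single relation $\sum_i\lambda_i=0$ among the $n$ invariants). I expect this to be the main obstacle. One route computes $\dim V$ directly from the incidence data of $D$, which is essentially the rank statement announced in the abstract. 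A more self-contained route is induction on the number of components building on Theorem 1.2: single self-crossing changes already lie in $W$ and can be realized as in the knot case, so each $K_i$ can be unknotted, while pairs of $K_i$–$K_j$ crossings furnish $\lambda$–preserving combinations that unlink the components once all pairwise linking numbers are even. Either route gives $W\subseteq V$, and then $t\in V$ exhibits the required sequence of region crossing changes trivializing $D$.
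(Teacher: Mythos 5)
Your setup is sound and your parity lemma (every region boundary meets each component's inter-component crossings an even number of times) is correct and is exactly the generalization of the paper's Lemma~3.2; the inclusion $V\subseteq W$ follows as you say. But the proof has a genuine gap precisely where you flag ``the main obstacle'': the realizability step $W\subseteq V$ is never established, and neither of your two sketched routes is carried out. Route 1 is the rank identity $\dim V=c-n+1$, which is Theorem~1.4 of the paper -- a substantial result whose proof in the paper itself rests on the realizability facts you are trying to avoid proving. Route 2 asserts that single self-crossing changes ``can be realized as in the knot case,'' but this is false as stated without further work: Theorem~1.1 applied to the subdiagram $K_i$ alone produces regions of $K_i$, each of which is a union of regions of the full diagram $D$ possibly containing entire nugatory pieces of the other components, so performing the region crossing changes in $D$ would also toggle crossings not involving $K_i$. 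The paper's Proposition~2.1 closes exactly this hole with a recoloring algorithm on the ``reducible parts'' inside each such region, and handles pairs of $K_i\cap K_j$ crossings with a separate smoothing argument (merge $K_i$ and $K_j$ by smoothing one of the two crossings, apply Theorem~1.1 to the other, then analyze whether the chosen regions ``affect'' the smoothed crossing). None of this appears in your proposal, and it is the real content of the theorem.

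A secondary point: your reduction throws away information and thereby makes the remaining task harder than necessary. The hypothesis that every $lk(K_i,K_j)$ is even gives, for a trivializing toggle vector $t$, that $t$ meets \emph{each individual} set $K_i\cap K_j$ in an even number of crossings -- not merely that the aggregated invariants $\lambda_i=\sum_{j\neq i}lk(K_i,K_j)\bmod 2$ are preserved. With that finer decomposition, $t$ splits into single self-crossing changes and same-pair doubles, which is exactly the list of moves in the paper's Proposition~2.2; one never needs the full equality $V=W$ (whose extra elements, e.g.\ a ``triangle'' of one crossing from each of $K_1\cap K_2$, $K_2\cap K_3$, $K_3\cap K_1$, lie in $W$ but not in the span of those elementary moves). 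If you restructure around the pairwise parities and then supply proofs of the two realizability lemmas, you recover the paper's argument.
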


The second part of this paper concerns the relation between region crossing change and incidence matrix. Before giving the definition of incidence matrix, we first give a brief review of the transformation between a link diagram and a signed planar graph. Let $D$ be an oriented link diagram, color the regions of $R^2$ divided by $D$ in checkerboard fashion. Since $D$ can be regarded as a 4-valent graph if we consider each crossing point as a vertex of degree 4, the color mentioned above must exist. Without loss of generality we assume the unbounded region has the white color, then we assign a vertex to every black region, an edge to every crossing. The sign of an edge is defined as follows:
\begin{center}
\includegraphics{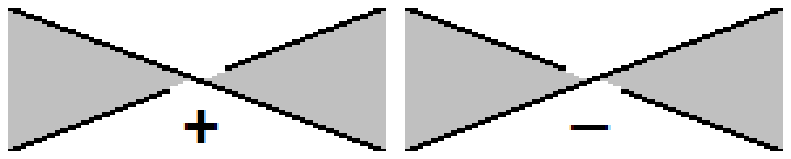} \centerline{\small Figure
3\quad}
\end{center}

From a link diagram $D$ we can obtain a signed planar graph $G$. If the crossing number of $D$ is $c$, then the size of $G$ is $c$, i.e. $G$ contains $c$ edges. Consider the dual graph of $G$, say $G'$, since $D$ is connected, then $G$ and $G'$ are both connected. It is evident the size of $G'$ is also $c$, and the the order (the number of the vertices) of $G$ plus the order of $G'$ is exactly the number of regions of $D$, which is $c+2$. In graph theory$^{[6]}$, the \textit{incidence matrix} $M(G)$ of a (undirected) graph $G$ is a $v\times e$ matrix, here $v, e$ denote the order and size of $G$ respectively:
\begin{center}
$M(G)=(m_x(y)),\quad x\in V(G)$ and $y\in E(G)$
\end{center}
and
\begin{center}
$m_x(y)=
\begin{cases}
1& \text{if $y$ is incident with $x$;}\\
0& \text{otherwise.}
\end{cases}$
\end{center}

Let $W$ and $B$ be the number of white regions and black regions of $D$ respectively. Then $M(G)$ is a $B\times c$ matrix and $M(G')$ is a $W\times c$ matrix. With the 1-1 correspondence between the edge set of $G$ and $G'$, we can construct a new $(W+B)\times c=(c+2)\times c$ matrix $M(D)$ as below:
\begin{center}
$M(D)=\begin{bmatrix}
 M(G)     \\
 M(G')  \\
\end{bmatrix}$
\end{center}

We call this $(c+2)\times c$ matrix $M(D)$ the \textit{incidence matrix} of the diagram $D$. We remark that $M(D)$ is not well-defined unless we fix an order of vertex sets of $G$ and $G'$, and an order of the edge set of $G$. However the rank of $M(D)$ is independent of those order mentioned above, hence it is well-defined. Since one vertex of $G$ or $G'$ corresponds to a region of $D$, and an edge of $G$ corresponds to a crossing point of $D$, hence we can name the regions and crossing points of $D$ by $R_1, \cdots, R_{c+2}$ and $P_1, \cdots, P_c$, respectively. Now it is easy to find that an element $m_{ij}$ of $M(D)$ is 1 or 0 exactly corresponding to whether $P_j$ is on the boundary of $R_i$ or not.

In general, given a signed planar graph $G$, it is not easy to detect the number of components of the link that $G$ represents. Sometimes we especially concern whether a signed planar graph corresponds to a knot diagram. For example, it is not evident that the graph below corresponds to a multi-component link diagram rather than a knot diagram. Since the sign is not important here, so there is no need for us to mention it.
\begin{center}
\includegraphics{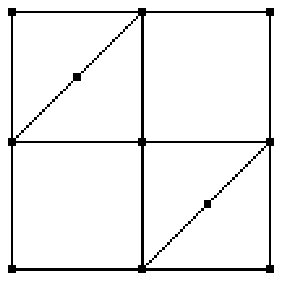} \centerline{\small Figure
4\quad}
\end{center}
One motivation of this paper is to give a complete solution to the question above. In fact, we prove that:
\begin{theorem}
A signed planar graph $G$ represents an $n$-component link diagram if and only if the $\mathbb{Z}_2$-rank of $M(D)$ equals to $c-n+1$, here $c$ denotes the size of $G$.
\end{theorem}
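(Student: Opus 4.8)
The plan is to pass to kernels. Since $M(D)$ is a $(c+2)\times c$ matrix, its $\mathbb{Z}_2$-rank equals $c-\dim_{\mathbb{Z}_2}\ker M(D)$, where, by the final description of $M(D)$, a vector $x\in\mathbb{Z}_2^c$ (indexed by the crossings $P_1,\dots,P_c$) lies in $\ker M(D)$ precisely when every region $R_i$ carries an even number of crossings $P_j$ with $x_j=1$ on its boundary. Writing $\ker M(G)$ and $\ker M(G')$ for the $\mathbb{Z}_2$-cycle spaces of the two checkerboard graphs, this condition is simply $\ker M(D)=\ker M(G)\cap\ker M(G')$, the edge sets that are simultaneously cycles of $G$ and of its planar dual $G'$. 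Because $G$ determines $D$ and hence the number of components $n$, and $n\mapsto c-n+1$ is injective, the asserted equivalence reduces to the single equality $\dim_{\mathbb{Z}_2}\ker M(D)=n-1$, which I will establish by two inequalities.

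For $\dim\ker M(D)\ge n-1$ I use the elementary fact underlying the necessity direction of Theorems 1.2--1.4: a region crossing change preserves all pairwise linking numbers modulo $2$. For each component $K_i$ let $x^{(i)}\in\mathbb{Z}_2^c$ mark exactly the crossings at which one strand belongs to $K_i$ and the other to a different component. Invariance of $lk(K_i,K_j)$ mod $2$ says that each region has an even number of $K_i$--$K_j$ crossings on its boundary; summing over $j\neq i$, each region has an even number of crossings between $K_i$ and the other components, which is exactly the statement $x^{(i)}\in\ker M(D)$. Since each inter-component crossing is counted by exactly two of the $x^{(i)}$, we have $\sum_i x^{(i)}=0$; and because $D$ is connected, any splitting of $\{K_1,\dots,K_n\}$ into two nonempty groups has a crossing joining them, so no proper nonempty subsum of the $x^{(i)}$ vanishes. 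Hence $x^{(1)},\dots,x^{(n)}$ span an $(n-1)$-dimensional subspace of $\ker M(D)$.

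The reverse inequality $\dim\ker M(D)\le n-1$ is the substantive part, and I expect it to be the main obstacle. The goal is to show that every $x\in\ker M(D)$ is one of these cuts, i.e. $x=\sum_{i\in A}x^{(i)}$ for some set $A$ of components. I would produce $A$ as a colour class of a $2$-colouring $f\colon\{K_1,\dots,K_n\}\to\mathbb{Z}_2$ built by propagation: set $f(K_1)=0$ and, whenever two distinct components meet at a crossing $P$, decree that their colours differ iff $x_P=1$. For this to be consistent one must show (a) that $x$ vanishes on every self-crossing, and (b) that around every closed chain of components the prescribed colour changes sum to zero. Both are where planarity must enter: a discrepancy in (a) or (b) is measured by the value of $x$ on a closed curve in the plane, and since $H_1(S^2;\mathbb{Z}_2)=0$ such a curve is a sum of region boundaries, on each of which $x$ sums to zero by the defining property of $\ker M(D)$. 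Making the pairing between these curves and the entries of $x$ precise---equivalently, showing directly that the common cycle space $\ker M(G)\cap\ker M(G')$ of the planar-dual pair $(G,G')$ has dimension exactly $n-1$---is the crux of the whole argument.

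Two checks will guide the write-up. First, the conversion between counting corners of a region and counting the $0/1$ incidences recorded by $M(D)$ must be handled at crossings where a region meets two opposite corners (loops of $G$ or $G'$); the mod-$2$ linking invariance used above already absorbs this in the lower bound, but the bare combinatorial form of step (b) needs the same care. Second, the formula specializes correctly: for a knot, $n=1$ forces $\mathrm{rank}_{\mathbb{Z}_2}M(D)=c$, i.e. the region crossing change operation is onto all single crossing changes, recovering Theorem 1.1; and for the Hopf link, $n=2$ gives rank $1$, matching the all-ones $4\times2$ incidence matrix.
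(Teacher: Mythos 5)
Your reduction to $\dim_{\mathbb{Z}_2}\ker M(D)=n-1$ and your lower bound $\dim\ker M(D)\ge n-1$ are sound and in fact coincide with the paper's ``first step'': your vectors $x^{(i)}$ are exactly the column relations $\sum_{k\in\alpha_{ij},\,j\neq i}c_k=0$ that the paper derives from Lemma 3.2, and your connectedness argument for the independence of $n-1$ of them is the same as the paper's two-case analysis. The problem is the other inequality. You yourself flag that showing every $x\in\ker M(D)$ is a ``cut'' $\sum_{i\in A}x^{(i)}$ is ``the crux of the whole argument,'' and then you do not prove it; what you offer is a plan whose key mechanism does not work as stated. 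Note that step (a) alone (every $x\in\ker M(D)$ vanishes on self-crossings) already contains, as its $n=1$ special case, the statement $\ker M(D)=0$ for knot diagrams, which is equivalent to Shimizu's Theorem 1.1 --- this is precisely the nontrivial input the paper imports as a black box (via Propositions 2.2 and 3.1) and then leverages through an induction on $n$ with an explicit block decomposition and row reduction of $M(D)$. Your proposal cites no such input, so it must re-prove this from scratch, and the suggested argument cannot do it: the ``defining property of $\ker M(D)$'' is already that $x$ sums to zero over the crossings on every region boundary, so decomposing an arbitrary closed curve into region boundaries and concluding that ``$x$ evaluates to zero on it'' yields no information beyond the hypothesis unless the pairing between curves and crossings is defined so that a curve picks up $x_P$ only at crossings it actually passes through in a controlled way --- and setting up that pairing (while handling the regions that meet a crossing at two opposite corners, where the $0/1$ incidence convention and the corner count disagree mod $2$) is exactly the missing proof.

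A secondary remark: the identification $\ker M(D)=\ker M(G)\cap\ker M(G')$ (the bicycle space of the checkerboard graph) is a genuinely different and attractive reformulation not present in the paper, and if you can prove that this bicycle space has dimension $n-1$ you would obtain a proof independent of Theorem 1.1 (indeed reproving it). But with the paper's convention $m_x(y)\in\{0,1\}$ the column of a nugatory crossing has three $1$'s rather than the two demanded by the usual cycle-space incidence matrix, so even this identification needs the correction you only gesture at. As it stands the proposal establishes $\operatorname{rank}M(D)\le c-n+1$ but not $\operatorname{rank}M(D)\ge c-n+1$.
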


We want to fix two conventions we will use throughout. First, a diagram always means a non-split diagram, i.e. if we regard the diagram as a 4-valent planar graph, then it is connected. Second, we will work with $\mathbb{Z}_2$ coefficients, i.e. linearly (in)dependence always means $\mathbb{Z}_2$-linearly (in)dependence, and the rank of a matrix always means $\mathbb{Z}_2$-rank of the matrix. Note that $\mathbb{Z}_2$-linearly independence induces the linearly independence with coefficient $\mathbb{Z}$. The remainder of the paper is organized as follows: in section 2 we will give the proof of Theorem 1.2 and Theorem 1.3. Section 3 contains the proof of a special case of Theorem 1.4, the knot diagram. Some relations between region crossing changes and incidence matrix are also discussed. Finally, in Section 4 we give the proof of Theorem 1.4.

\section{Region crossing change on 2-component link diagram}
In this section we will study the behavior of region crossing changes on 2-component link diagram $D$. Before giving the proof of Theorem 1.2, we first give a proposition as follows:
\begin{proposition}
Given a 2-component link diagram $D=K_1\cup K_2$, suppose $D'$ is obtained from $D$ by a crossing change at a crossing point of $K_1\cap K_1$ or $K_2\cap K_2$, and $D''$ is obtained from $D$ by crossing changes at two crossing points of $K_1\cap K_2$. Then both $D'$ and $D''$ can be obtained from $D$ by region crossing changes.
\end{proposition}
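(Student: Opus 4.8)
The plan is to recast the statement as a question in linear algebra over $\mathbb{Z}_2$. Performing region crossing changes at a set $S$ of regions changes precisely those crossings that lie on the boundary of an odd number of the regions in $S$; hence the crossing-change patterns achievable by region crossing changes are exactly the vectors in the row space of $M(D)$, the rows being the region-vectors. So $D'$ and $D''$ are realizable iff the vectors $e_P$ (for a self-crossing $P$) and $e_{P_1}+e_{P_2}$ (for inter-component crossings $P_1,P_2$) lie in that row space. The one structural fact I would isolate first is a parity lemma: every region of $D$ borders an even number of crossings of $K_1\cap K_2$. This follows by walking once around a boundary circle of the region and recording the component label ($1$ or $2$) of each successive arc; a boundary crossing is an inter-component crossing exactly when the two arcs meeting there carry different labels, and a cyclic word in two letters has an even number of label changes. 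Summed over the boundary circles the count stays even. Equivalently, every row of $M(D)$ is annihilated by the functional reading off the parity of the inter-component coordinates, so no single inter-component crossing can ever be changed alone; this is the obstruction that forces ``two crossings at once'' in the statement.

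For $D'$ I would reduce to Shimizu's theorem (Theorem 1.1). Say $P$ is a self-crossing of $K_1$. Erase $K_2$ to obtain a knot diagram $D_1$ of $K_1$, whose crossings are exactly the self-crossings of $K_1$; by Theorem 1.1 there is a set $\mathcal{F}$ of faces of $D_1$ whose region crossing changes realize the single crossing change at $P$. I would then lift $\mathcal{F}$ to $D$ by taking, for each $f\in\mathcal{F}$, all regions of $D$ contained in $f$, and perform region crossing changes at this collection. The point is to check the net effect crossing by crossing. A self-crossing of $K_2$ lies in the interior of a single face of $D_1$, so all four of its corner regions are selected together or not at all and it is flipped an even number of times; an inter-component crossing lies on an arc of $K_1$, i.e.\ on an edge separating two faces of $D_1$, so the number of its corners lying in any given face is even and it too is flipped evenly; and a self-crossing of $K_1$ is flipped exactly as it is by $\mathcal{F}$ in $D_1$. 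Hence the lifted operation changes exactly $P$.

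For $D''$ I would again manufacture a knot and apply Theorem 1.1. Given inter-component crossings $P_1,P_2$, perform the oriented smoothing at $P_1$; this fuses $K_1$ and $K_2$ into a single knot with diagram $\widehat{D}$ in which $P_2$ is an ordinary crossing. Theorem 1.1 supplies a region set $\mathcal{G}$ of $\widehat{D}$ realizing the crossing change at $P_2$ alone. Translating $\mathcal{G}$ back to $D$ (the regions of $D$ and $\widehat{D}$ agree away from $P_1$), the resulting operation changes exactly $P_2$ among all crossings other than $P_1$, and changes no self-crossing at all. Now the parity lemma finishes the argument with no local computation at $P_1$: the total number of inter-component crossings changed must be even, and it is already $1$ away from $P_1$, so $P_1$ must be changed as well. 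Thus exactly $\{P_1,P_2\}$ is changed.

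I expect the real work to be the bookkeeping in the two lifts, and in particular the degenerate configurations where a single region meets a crossing at two opposite corners (equivalently, loops or isthmuses in the associated planar graphs). There the naive corner count and the $0/1$ incidence used in $M(D)$ differ in parity, which can corrupt the ``flipped evenly'' claims above and, in the $D''$ case, the identification of regions away from $P_1$; these cases must be treated separately, or removed by first reducing to a diagram free of such configurations. A smaller point to dispatch is that $\widehat{D}$ must remain connected so that Theorem 1.1 applies, which I would arrange by an appropriate choice of $P_1$.
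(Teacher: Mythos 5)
Your overall strategy coincides with the paper's: for $D'$, erase $K_2$ and lift Shimizu's region set from the knot diagram $D_1$ of $K_1$ back to $D$; for $D''$, smooth one inter-component crossing and apply Theorem 1.1 to the other. Your treatment of $D''$ is in fact cleaner than the paper's: the paper runs a case analysis on whether the lifted region set ``affects'' the smoothed crossing and only observes in a remark afterwards that the bad case is excluded by parity, whereas you invoke the parity lemma (the paper's Lemma 3.2, proved independently and elementarily, so there is no circularity) up front to force $P_1$ to flip. That half I would accept, modulo the region-identification caveat you yourself raise.

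The genuine gap is in the $D'$ argument. Your claim that a self-crossing of $K_2$ interior to a face $f$ of $D_1$ ``is flipped an even number of times'' when all regions of $D$ inside $f$ are selected fails exactly when that crossing is nugatory: then two opposite corners belong to the same region of $D$, only three distinct regions of $D$ meet the crossing, and selecting all of them flips it three times. You flag this degenerate configuration, but neither proposed remedy is carried out, and the second one (``reducing to a diagram free of such configurations'') is not available here: the proposition asserts realizability by region crossing changes on the given diagram $D$, so you cannot first simplify $D$ by Reidemeister moves. Repairing this is not mere bookkeeping --- it is the bulk of the paper's proof, which isolates the ``reducible parts'' of nugatory crossings of $K_2\cap K_2$ lying inside a selected face and runs an explicit outside-to-inside recoloring algorithm (four cases, depending on the colors of the opposite and outer regions) to produce a corrected set of regions that still flips every $K_1$-self-crossing on the boundary of that face while preserving all crossings of $K_1\cap K_2$ and of $K_2\cap K_2$. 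Without an argument of this kind, the $D'$ half of your proof is incomplete.
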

\begin{proof}
The fact $D'$ can be obtained from $D$ by region crossing changes mainly follows from Theorem 1.1. Let $P$ be a crossing point of $K_1\cap K_1$ and $R_\alpha$ those regions of $K_1$ corresponding to Theorem 1.1. Note that each region of $K_1$ is the union of some regions of $K_1\cup K_2$, and if each region of $R_\alpha$ includes no nugatory crossing of $K_2\cap K_2$, then $R_\alpha$ satisfy our requirement.

If one region $R_i (i\in \alpha)$ includes some nugatory crossing points of $K_2\cap K_2$, let us consider the three regions around a nugatory crossing point. Given a nugatory crossing point, there exist some regions which can be contained in a disk whose boundary intersects the diagram only at the nugatory crossing. We call these regions a \textit{reducible part} of the nugatory crossing, name the region on the other side of the nugatory crossing the \textit{opposite region}, for the third region, we use the \textit{outer region} to denote it. See the figure below:
\begin{center}
\includegraphics{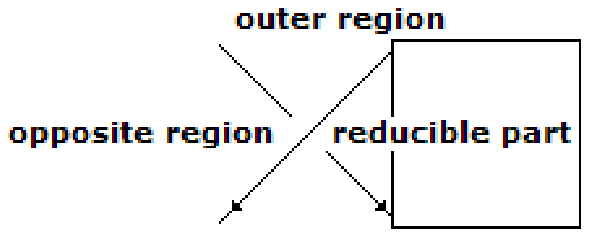} \centerline{\small Figure
5\quad}
\end{center}

Now we want to find some regions such that the effect of region crossing changes on these regions will change the crossing points of $K_1\cap K_1$ on the boundary of $R_i$ but preserving the others. First we color $R_i$ black. It is obvious that given two reducible parts $T_1$, $T_2$ in $R_i$, if $T_1\cap T_2\neq \emptyset$ then either $T_1\subset T_2$ or $T_2\subset T_1$. We apply the algorithm below to recolor those reducible parts from outside to inside, i.e. if $T_1\subset T_2$, we apply the recoloring for $T_2$ before $T_1$.
\begin{itemize}
\item The opposite region and the outer region are both colored black. Then we recolor the third region around the nugatory crossing white, and recolor other regions of $T$ in checkerboard fashion, according to this white region.
\item The opposite region and the outer region are both colored white. Then we recolor all regions of $T$ white.
\item The opposite region is colored black and the outer region is colored white. Then we recolor the third region around the nugatory crossing black, and recolor other regions of $T$ in checkerboard fashion, according to this black region.
\item The opposite region is colored white and the outer region is colored black. Then we recolor all regions of $T$ black.
\end{itemize}
For example, for a reducible part which is not contained in any other reducible part, we apply the first case of the algorithm, since we color $R_i$ black first.

After recoloring all the reducible parts, we take region crossing change on all the regions with black color, it is easy to find that all the crossing points of $K_1\cap K_2$ on the boundary of $R_i$ and $K_2\cap K_2$ in the inner of $R_i$ are preserved, and all the crossing points of $K_1\cap K_1$ on the boundary of $R_i$ are changed. Repeat the process for all regions of $R_\alpha$, then we can obtain $D'$ from $D$ by region crossing changes.

Now we show that $D''$ also can be obtained from $D$ by region crossing changes. Let $P, Q$ be two crossing points of $K_1\cap K_2$. Consider crossing point $P$, we use $R_1, R_2, R_3, R_4$ to denote the regions around $P$. Since $P\in K_1\cap K_2$, then $P$ is not a nugatory crossing, hence $R_1\neq R_3$ and $R_2\neq R_4$. A crossing point can be resolved in two ways, say 0-smoothing and 1-smoothing, and both of them can make $K_1$ and $K_2$ into one component. Now we take 0-smoothing at $P$, see the figure below:
\begin{center}
\includegraphics{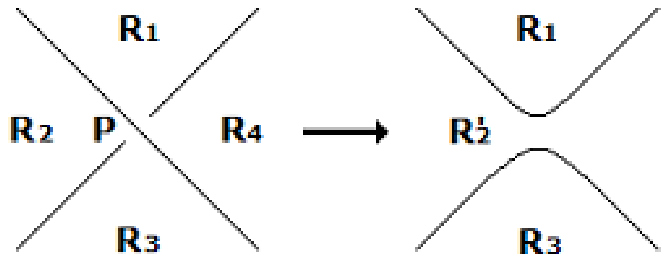} \centerline{\small Figure
6\quad}
\end{center}
After the 0-smoothing, $R_2$ and $R_4$ become one region, say $R_2'$. The link diagram is changed to be a knot diagram, and $Q$ is a crossing point of this knot diagram. According to Theorem 1.1 we can find some regions $R_\alpha$ such that taking regions crossing changes at these regions the crossing point $Q$ will be changed while others are preserved.

If $R_\alpha$ contain only one region of $R_1$ or $R_3$, in this case we say that $R_\alpha$ affect $P$, because if one takes the same region crossing changes $($ replacing $R_2'$ with $R_2\cup R_4$ if needed $)$ on $D$, then both $P$ and $Q$ are changed and other crossing points are preserved.

If $R_\alpha$ contain both $R_1$ and $R_3$ or none of them, in this case we say that $R_\alpha$ do not affect $P$, since if one takes the same crossing changes on $D$, the crossing point $P$ does not change. Now we can take 0-smoothing at $Q$, if the associated region crossing changes $R_\beta$ affect $Q$ itself, then these regions satisfy the requirement.

If $R_\alpha$ and $R_\beta$ do not affect $P$ and $Q$ respectively, then consider the regions $R_\gamma=R_\alpha\cup R_\beta$, which will change the crossing at $P$ and $Q$ but preserve other crossing points. The proof is finished.
\end{proof}

\textbf{Remark} We remark that during the process of the proof above, the case that $R_\alpha$ do not affect $P$ in fact can not happen. Since if so, $R_\alpha$ will only change the crossing at $Q$, but together with Proposition 2.1 we can conclude that one can use region crossing changes to change any one crossing point of $D$ with other crossing points preserved. This contradicts with Proposition 3.1 in Section 3.

Now we turn to the proof of Theorem 1.2.
\begin{proof}First we assume that $lk(K_1, K_2)$ is even, we claim that in this case we can deform $L$ into a trivial link by region crossing changes. Obviously there exist some crossing points such that when we make crossing change on these crossing points, $L$ becomes to be a trivial link. Without loss of generality, we use $\{P_1, \cdots, P_m\}$ to denote a group of these crossing points. Note that every time making a crossing change at a crossing point of $K_1\cap K_2$, $lk(K_1, K_2)$ will increase or decrease by one, and making a crossing change at a self-crossing point, $lk(K_1, K_2)$ is preserved. Since after making crossing change at $\{P_1, \cdots, P_m\}$, $lk(K_1, K_2)=0$, hence we conclude that $\{P_1, \cdots, P_m\}\cap \{K_1\cap K_2\}$ includes even elements. According to Proposition 2.1, there exist some regions such that taking crossing change at these regions, the crossing of $\{P_1, \cdots, P_m\}$ will be changed while other crossing points are preserved. We finish the proof of our claim.

Now we show that if $lk(K_1, K_2)$ is odd, region crossing change is not an unknotting operation. Recall the remark below Proposition 2.1, we conclude that $lk(K_1, K_2)$ mod2 is invariant under region crossing change. Hence the result follows.
\end{proof}

In general, for an $n$-component link diagram $D=K_1\cup \cdots \cup K_n$, here $K_1, \cdots, K_n$ denote the diagram of each component, similar to Proposition 2.1 we have:
\begin{proposition}
Given an $n$-component link diagram $D=K_1\cup \cdots \cup K_n$, suppose $D'$ is obtained from $D$ by a crossing change at a crossing point of $K_i\cap K_i$ $(1\leq i\leq n)$ , and $D''$ is obtained from $D$ by crossing changes at two crossing points of $K_i\cap K_j$ $(1\leq i< j\leq n)$. Then both $D'$ and $D''$ can be obtained from $D$ by region crossing changes.
\end{proposition}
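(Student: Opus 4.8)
The plan is to run the two halves of the argument in Proposition 2.1 again, adapting each to the presence of more than two components. For the claim about $D'$, I would fix the self-crossing $P\in K_i\cap K_i$ and, exactly as before, regard $K_i$ by itself as a knot diagram (erasing all components except $K_i$). Theorem 1.1 then supplies a family of regions $R_\alpha$ of $K_i$ whose region crossing changes realize the single crossing change at $P$ on $K_i$. Each such region is a union of regions of $D$, and the only thing that can go wrong when I lift the recoloring to $D$ is the presence of crossings of the complementary subdiagram $\bigcup_{\ell\ne i}K_\ell$ sitting inside these regions. I would dispose of these by applying the reducible-part recoloring algorithm of Proposition 2.1 verbatim: the four colouring rules depend only on the reducible-part/opposite-region/outer-region configuration around a nugatory crossing, not on which components create it, so they preserve every interior crossing and every crossing of $K_i\cap K_\ell$ on the boundary while flipping exactly the crossings of $K_i\cap K_i$ that I want. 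Repeating over all members of $R_\alpha$ produces $D'$.

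For the claim about $D''$, I would take $P,Q\in K_i\cap K_j$ and perform the $0$-smoothing at $P$. The essential new feature, and the point where the two-component proof does not transcribe directly, is that smoothing a single $K_i\cap K_j$ crossing fuses $K_i$ and $K_j$ into one component but leaves the remaining components untouched, so the result is an $(n-1)$-component link diagram $\tilde D$ rather than a knot diagram; hence Theorem 1.1 is no longer available at this step. I would get around this by invoking the half of the statement I have just established: in $\tilde D$ the point $Q$ is a self-crossing of the merged component, so the $D'$-case, applied to $\tilde D$, furnishes regions $R_\alpha$ whose region crossing changes flip $Q$ and preserve all other crossings of $\tilde D$.

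From here the argument is the same bookkeeping as in Proposition 2.1. Lifting $R_\alpha$ back to $D$ (splitting the merged region $R_2'$ into $R_2\cup R_4$ where necessary), I would run the affect-$P$ dichotomy: the parity of how many of the two opposite regions $R_1,R_3$ at $P$ lie in $R_\alpha$ decides whether $P$ is flipped. If $R_\alpha$ contains exactly one of them it flips both $P$ and $Q$ and nothing else, and I am done; if it contains both or neither then $P$ is preserved, so I symmetrically smooth $Q$, obtain regions $R_\beta$ flipping $P$ alone, and take $R_\gamma=R_\alpha\cup R_\beta$, which flips precisely $P$ and $Q$. The main obstacle is therefore the one flagged above---replacing the knot-diagram input of Theorem 1.1 by the weaker $(n-1)$-component diagram---together with checking that the recoloring in the first half still annihilates all interior crossings when these now come from several distinct components and are of mixed self- and mutual-crossing type; both are resolved by the order in which the two halves are proved, the self-crossing case first and then fed into the mutual-pair case, rather than by any genuinely new construction.
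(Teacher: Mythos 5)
Your proposal is correct and follows essentially the same route as the paper: the paper's (one-line) proof likewise reduces the self-crossing case to the argument of Proposition 2.1 via the recoloring algorithm, and derives the mutual-crossing case from the first part together with the $0$-smoothing/affect-$P$ argument, precisely because the smoothed diagram is an $(n-1)$-component link rather than a knot. You have in fact supplied more detail than the paper does, correctly identifying the one genuine subtlety (Theorem 1.1 no longer applies after smoothing, so the first half must be invoked instead).
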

\begin{proof}
The proof of the first part is similar to the proof of the first part of Proposition 2.1, and the second part follows from the first part and the proof of the second part of Proposition 2.1.
\end{proof}

Theorem 1.3 is direct follows from this proposition.

We remark that the inverse statement of Theorem 1.3 is incorrect. The figure below gives a counterexample.
\begin{center}
\includegraphics{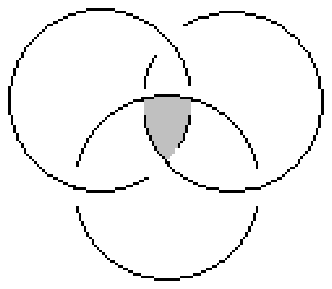} \centerline{\small Figure
7\quad}
\end{center}

\section{Incidence matrix of knot diagram}
In this section we study the relation between region crossing change and incidence matrix, and we will prove Theorem 1.4 for the case of knot diagrams.
\begin{proposition}Given a signed planar graph $G$, let $D$ denote the corresponding link diagram of $G$, then $D$ is a knot diagram if and only if the $\mathbb{Z}_2$-rank of $M(D)$ is $c$, here $c$ denotes the size of $G$.
\end{proposition}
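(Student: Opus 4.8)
The plan is to translate the rank condition on $M(D)$ into a statement about which crossing changes are realizable by region crossing changes. First I would note that a region crossing change at a single region $R_i$ flips exactly the crossings on $\partial R_i$, i.e. the crossings recorded by the $i$-th row of $M(D)$; performing region crossing changes at a family of regions indexed by $\alpha$ flips the crossings recorded by the $\mathbb{Z}_2$-sum of the corresponding rows. Hence the set of crossing-change patterns achievable by region crossing changes is precisely the $\mathbb{Z}_2$-row space of $M(D)$. Since row rank equals column rank, the condition $\mathrm{rank}\,M(D)=c$ is equivalent to saying the row space is all of $\mathbb{Z}_2^c$, i.e. that \emph{every} single crossing change is realizable.

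For the forward direction ($D$ a knot diagram $\Rightarrow$ rank $=c$) I would simply invoke Theorem 1.1: for each crossing $P_j$ it furnishes a subset $\alpha_j$ of regions whose region crossing changes realize exactly a crossing change at $P_j$, so the $\mathbb{Z}_2$-sum of the rows indexed by $\alpha_j$ is the standard basis vector $e_j$. Thus every $e_j$ lies in the row space, the row space is all of $\mathbb{Z}_2^c$, and the rank is $c$.

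For the converse I would argue the contrapositive: assuming $D$ has $n\ge 2$ components, I will exhibit a nonzero element of $\ker M(D)$. The key reformulation is that $M(D)v=0$ means $M(G)v=0$ and $M(G')v=0$, so $\ker M(D)=Z(G)\cap Z(G')$, where $Z(\cdot)$ is the $\mathbb{Z}_2$-cycle space; by planar duality the cycle space of $G'$ is the cut space of $G$ and vice versa. For a fixed component $K_i$, let $u_i\in\mathbb{Z}_2^c$ be the indicator of the crossings between $K_i$ and the other components. I claim $u_i$ is simultaneously an edge cut of $G$ and of $G'$, which places it in $Z(G')\cap Z(G)=\ker M(D)$. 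To prove this I would color every face of $D$ by the mod-$2$ winding number $c_i$ of the closed curve $K_i$ (well defined since $K_i$ is null-homologous in $S^2$ over $\mathbb{Z}_2$, so the color flips exactly across arcs of $K_i$). A local computation at each crossing then shows that its two black faces receive different $c_i$-colors precisely when the crossing is $K_i$-mixed, and likewise for its two white faces; hence $u_i$ is exactly the cut of $G$ (and of $G'$) determined by the vertex $2$-coloring $c_i$. Finally, since $D$ is non-split and $n\ge 2$, the component $K_i$ must cross some other component, so $u_i\ne 0$, giving $\ker M(D)\ne 0$ and $\mathrm{rank}\,M(D)<c$.

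The main obstacle is proving $u_i\in\ker M(D)$ in the converse. A direct attempt to count $K_i$-mixed crossings along the boundary walk of a face runs into multiplicity trouble at nugatory crossings, where a single crossing can meet one face in several corners, so the naive parity bookkeeping is delicate. The winding-number coloring $c_i$ is what makes the argument clean: it recasts $u_i$ as a vertex cut in both $G$ and $G'$, for which membership in the two cycle spaces is immediate from planar duality, bypassing all boundary-walk subtleties. I note that the same construction shows $u_1,\dots,u_n$ span $\ker M(D)$ subject only to $\sum_i u_i=0$, which foreshadows the general count $c-n+1$ of Theorem 1.4, but for this proposition only $u_i\ne 0$ is needed.
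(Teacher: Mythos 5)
Your proof is correct, and its skeleton coincides with the paper's: necessity follows from Theorem 1.1 exactly as you argue (the paper likewise observes that the realizable crossing-change patterns are the $\mathbb{Z}_2$-row space and that Theorem 1.1 puts every standard basis vector in it), and for sufficiency the paper exhibits the very same dependence --- the columns indexed by the crossings between one component $K_i$ and the rest sum to zero, which is precisely your statement $u_i\in\ker M(D)$. Where you genuinely diverge is in how that dependence is certified. The paper's Lemma 3.2 does it by a direct boundary-walk count: the boundary of each region decomposes into arcs lying alternately on $K_i$ and on the other components, so the mixed crossings on that boundary, being the transition points, come in pairs. You instead introduce the mod-2 winding number of $K_i$ as a $2$-coloring of the faces, identify $u_i$ as the corresponding edge cut of both $G$ and $G'$, and invoke the planar duality between cycle space and cut space to land in $\ker M(G)\cap\ker M(G')$. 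Your route buys robustness and extra information --- it makes the kernel description $\langle u_1,\dots,u_n\rangle$ subject to $\sum_i u_i=0$ transparent, essentially giving the upper-bound half of Theorem 1.4 for free --- at the cost of more machinery. Two small points if you keep it: under the paper's incidence convention a self-loop contributes $1$ (not $0$) to its vertex's row, so $\ker M(G)$ agrees with the cycle space only on loop-free edge sets; this is harmless here because a crossing between two distinct components is never nugatory, hence $u_i$ contains no loops of $G$ or $G'$. For the same reason the ``multiplicity trouble at nugatory crossings'' you were guarding against never actually touches the crossings counted by $u_i$, so the paper's elementary arc-alternation argument already suffices.
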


We use $r_1, \cdots, r_{c+2}$ and $c_1, \cdots, c_c$ to denote the row vectors and column vectors of $M(D)$ respectively, which correspond to $R_1, \cdots, R_{c+2}$ and $P_1, \cdots, P_c$ as we mentioned before. Recall that an element $m_{ij}$ of $M(D)$ is 1 or 0 exactly corresponding to whether $P_j$ is on the boundary of $R_i$ or not, and note that we are working over $\mathbb{Z}_2$. Hence the positions of 1's in $r_i$ tell us which crossing points will be changed if we take the region crossing change on $R_i$. Similarly the positions of 1's in $\sum\limits_{i\in \alpha} r_i$ tell us those crossing points which will be changed if we take the region crossing changes on $R_\alpha$.

According to Theorem 1.1, for any crossing point $P_j$ of $D$, one can obtain a diagram $D'$ by region crossing changes, where $D'$ is different from $D$ at $P_j$. This means we can find some $\{r_i\}$ such that
\begin{center}
$\sum\limits_{i} r_i= (0, \cdots, 0, 1, 0, \cdots, 0)$
\end{center}
where 1 is on the $j$-th column. Since $j$ can be chosen as any number from 1 to $c$, then with $\mathbb{Z}_2$-linear combination of row vectors of $M(D)$ we can construct $c$ linearly independent row vectors. It follows that the rank of $M(D)$ is $c$, hence we have proved the necessary part of Proposition 3.1.

To prove the sufficient part of Proposition 3.1, we need a simple lemma as follows:
\begin{lemma}
For a 2-component link diagram, the boundary of each region contains even number of non-self crossing points, i.e. those crossing points which are the intersections of different components.
\end{lemma}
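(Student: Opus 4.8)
The plan is to prove this by a two-coloring (parity) argument carried out along the boundary of the region. Fix a region $R$ and regard $\partial R$ as a closed walk $W$ in the $4$-valent plane graph underlying $D$, i.e. a cyclic sequence of arcs of $D$ separated by the crossing points lying on $\partial R$. Since $D=K_1\cup K_2$, every arc of $D$ lies on exactly one of the two components, so I label each arc of $W$ by $1$ or $2$ accordingly. The entire problem then reduces to counting how many times this label changes during one trip around $W$.

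The key local input is that at a crossing the two strands cross transversally, so the four half-arcs around a crossing alternate between the two strands in cyclic order. Hence, when $W$ passes through a crossing $P$ on $\partial R$, the incoming arc and the outgoing arc (the two sides of the corner of $R$ at $P$) belong to \emph{different} strands of $P$. Therefore the label is preserved at a self-crossing ($K_i\cap K_i$, both strands on the same component) and is switched at a non-self crossing ($K_1\cap K_2$, the two strands on different components). In other words, the number of label changes along $W$ equals the number of times $W$ passes through non-self crossings.

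I would then finish using the fact that $W$ is closed: after a full loop the label must return to its initial value, so the number of label changes is even. What remains is to identify this count of passages with the number of \emph{distinct} non-self crossings on $\partial R$, which is what ``lying on the boundary'' means for the incidence matrix. Here I use that a non-self crossing is never nugatory --- at a nugatory crossing the two strands belong to one and the same loop, hence the same component --- and that a link diagram is bridgeless; together these force the four corner regions at a non-self crossing to be pairwise distinct, so $R$ occupies at most one corner and $W$ visits such a crossing exactly once. Thus the number of non-self crossings on $\partial R$ equals the (even) number of label changes, as claimed.

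I expect the main obstacle to be exactly this last bookkeeping step: reconciling the multiplicity with which the boundary walk $W$ may revisit a crossing with the multiplicity-free notion of a crossing being ``on $\partial R$''. The alternation-of-strands fact of the second paragraph is geometrically immediate and does the real work; the care is needed only to rule out a non-self crossing being met by $R$ in two opposite corners (the nugatory case) or two adjacent corners (a bridge), after which the parity conclusion is automatic.
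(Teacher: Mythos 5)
Your proof is correct and follows essentially the same route as the paper: two-color the arcs of $\partial R$ by component, observe that the color switches exactly at non-self crossings (since adjacent half-arcs at a crossing lie on different strands), and conclude by the parity of color changes along a closed walk. The only difference is that you explicitly justify that a non-self crossing is met by the boundary walk of a region exactly once (ruling out the nugatory and bridge configurations), a multiplicity point the paper's proof passes over silently but which is needed to match the walk count with the incidence-matrix notion of ``lying on the boundary.''
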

\begin{proof}
We can use white and black to color the diagrams of those two components. If the boundary of a region contains only self-crossing point, the lemma is correct on this region. If a region contains some non-self crossing points, then the boundary of this region can be regard as several arcs with color white, black, white, black $\cdots$ If there are $n$ white arcs, then the number of those crossing points described in the lemma is $2n$. Thus we finish the proof.
\end{proof}

Now we turn to the proof of the sufficient part of Proposition 3.1.
\begin{proof}
Given a link diagram $D$ with crossing number $c$, and the rank of $M(D)$ is $c$. We assume that $D$ is not a knot diagram, i.e. the link that associates to $D$ has at least two components.

First, we consider a simple case, we suppose the link associated to $D$ has only two components. We use $K_1$ and $K_2$ to denote the associated diagrams of these two knots, obviously $D=K_1\cup K_2$. We divide the crossing points of $D$ into three cases: $K_1\cap K_1, K_2\cap K_2$ and $K_1\cap K_2$. Without loss of generality, we assume $K_1\cap K_2=\{P_1, \cdots, P_s\}$ and $R_1, \cdots, R_t$ are those regions whose boundaries contain some crossing points of $K_1\cap K_2$. Then all elements $m_{ij}=0$ if $t+1\leq i\leq c+2$ and $1\leq j\leq s$. On the other hand, according to Lemma 3.2, $\sum\limits_{j=1}^s m_{ij}=0$ for any $1\leq i\leq t$. Therefore we conclude that $\sum\limits_{j=1}^s m_{ij}=0$ for any $1\leq i\leq c+2$, i.e.
\begin{center}
$\sum\limits_{i=1}^s c_i=\begin{bmatrix}
 0 \\
 \vdots \\
 0 \\
\end{bmatrix}.$
\end{center}
This contradicts the fact that the rank of $M(D)$ is $c$.

Now we consider the general case of $n$-component link diagram. As before we use $K_1, \cdots, K_n$ to denote the diagram of each component and $D=K_1\cup \cdots \cup K_n$. Since $D$ is non-split, it is impossible that $K_1\cap K_j=\emptyset$ for all $2\leq j\leq n$. Consider all the crossing points of $K_1\cap K_j$ where $2\leq j\leq n$, we can assume that $\bigcup\limits_{j=2}^n(K_1\cap K_j)=\{P_1, \cdots, P_u\}$, and $R_1, \cdots, R_v$ are those regions whose boundaries contain some crossing points of $K_1\cap K_j$. Similar to the proof of Lemma 3.2, it is not difficult to observe that $\sum\limits_{j=1}^u m_{ij}=0$ for any $1\leq i\leq c+2$. It follows that
\begin{center}
$\sum\limits_{i=1}^u c_i=\begin{bmatrix}
 0 \\
 \vdots \\
 0 \\
\end{bmatrix},$
\end{center}
which contradicts the assumption that the rank of $M(D)$ is $c$.
\end{proof}

If a knot diagram $D$ is reduced, i.e. $D$ contains no nugatory crossing, then both $G$ and $G'$ contain no self-loop, i.e. $G$ and $G'$ contain no such edge which connects a vertex to itself. It is obvious that the rank of $M(G)\leq B-1$ and the rank of $M(G')\leq W-1$, since the sum of all the row vectors of $M(G)$ $(M(G'))$ is 0. On the other hand the rank of $M(D)$ is $c$, hence we conclude that the rank of $M(G)$ and $M(G')$ are $B-1$ and $W-1$ respectively, and arbitrary $B-1$ $(W-1)$ row vectors of $M(G)$ $(M(G'))$ are linearly independent hence form a basis for $M(G)$ $(M(G'))$. Then the rank of $M(D)$ equals to $c$ if and only if one takes arbitrary $B-1$ row vectors of $M(G)$ and arbitrary $W-1$ row vectors of $M(G')$ together they are still linearly independent.

We remark that Proposition 3.1 gives a sufficient and necessary condition to detect whether a signed planar graph corresponds to a knot diagram. For some special cases, we have a corollary below, which implies that the planar graph in Figure 4 corresponds to a multi-component link diagram.
\begin{corollary}
If the degree of each vertex of $G$ and $G'$ is even, then $G$ corresponds to a multi-component link diagram.
\end{corollary}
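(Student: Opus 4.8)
The plan is to read the corollary as a statement about the $\mathbb{Z}_2$-rank of $M(D)$ and to deduce it directly from Proposition 3.1. That proposition tells us $D$ is a knot diagram if and only if the rank of $M(D)$ equals $c$; since $M(D)$ has exactly $c$ columns, this is the same as saying the columns $c_1,\ldots,c_c$ are $\mathbb{Z}_2$-linearly independent. Hence, to prove that $G$ represents a \emph{multi-component} link, it suffices to exhibit a single nontrivial $\mathbb{Z}_2$-linear relation among the columns of $M(D)$, for then the rank drops below $c$ and $D$ fails to be a knot.

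The natural candidate is the all-ones relation, i.e. the sum of every column. First I would compute $\sum_{j=1}^{c} c_j = M(D)\mathbf{1}$, where $\mathbf{1}$ denotes the all-ones vector indexed by the crossings. By the description of $M(D)$ recalled before Proposition 3.1, the entry of $M(D)\mathbf{1}$ in the row indexed by a region $R_i$ is just the number of crossings lying on the boundary of $R_i$, taken mod $2$. If $R_i$ is black it corresponds to a vertex of $G$ and this count is the degree of that vertex; if $R_i$ is white it corresponds to a vertex of $G'$ and the count is the degree of the corresponding vertex of $G'$. By hypothesis every such degree is even, so every coordinate of $M(D)\mathbf{1}$ vanishes over $\mathbb{Z}_2$; that is, $\sum_{j=1}^{c} c_j = 0$. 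Provided $c\geq 1$ this is a nontrivial relation, so the columns are dependent, the rank of $M(D)$ is at most $c-1<c$, and Proposition 3.1 forces $D$ to be a multi-component link diagram.

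The one point that needs care — and the only place the argument could slip — is the identification of ``degree of a vertex'' with ``number of crossings on the boundary of the corresponding region'': the number of $1$'s in a row of $M(D)$ equals the degree of the associated vertex only when that vertex carries no self-loop, i.e. when $D$ has no nugatory crossing. I expect this to be the main (though mild) obstacle, and I would dispose of it by noting that the hypothesis forces both $G$ and $G'$ to be loop-free. Indeed, since every vertex of $G'$ has even degree, every face of $G$ has even length, so the connected plane graph $G$ is bipartite and hence contains no odd cycle, in particular no loop; symmetrically, since every vertex of $G$ has even degree, $G'$ is bipartite and loop-free. With loops excluded, the row sums above are literal vertex degrees, the computation of $M(D)\mathbf{1}$ is exact, and the proof goes through as described.
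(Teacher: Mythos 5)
Your proof is correct and follows essentially the same route as the paper's: the evenness hypothesis forces every row sum of $M(D)$ to vanish over $\mathbb{Z}_2$, hence $\sum_{j=1}^{c}c_j=0$, the rank drops below $c$, and Proposition 3.1 yields that $D$ is not a knot diagram. Your additional step ruling out loops in $G$ and $G'$ (via bipartiteness from the even face lengths) carefully justifies the identification of row sums with vertex degrees --- a point the paper's two-line proof asserts without comment --- so it is a sound refinement rather than a different argument.
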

\begin{proof}
According to the assumption, each row of $M(D)$ contains even 1's, hence \begin{center}
$\sum\limits_{i=1}^c c_i=\begin{bmatrix}
 0 \\
 \vdots \\
 0 \\
\end{bmatrix}.$
\end{center} By Proposition 3.1, the result follows.
\end{proof}

\textbf{Remark} In [5], the author pointed out that on the standard diagram of Hopf link one can not make crossing change at only one crossing point by region crossing changes, since two crossing points both lie on the boundaries of all four regions. From Proposition 3.1 we conclude that with region crossing changes, if we can obtain all those diagrams which are different from the original diagram at one crossing, then the diagram must be a knot diagram.

\section{Incidence matrix of $n$-component link diagram}
In this section we give the proof of Theorem 1.4.
\begin{proof}Obviously it suffices to prove that if $D$ is an $n$-component link diagram, then rank of $M(D)$ equals to $c-n+1$. For $n=1$, the statement is correct by Proposition 3.1. We assume the statement is correct for any $(n-1)$-component link diagram, we want to show that it is also correct for $n$-component link diagram $D$. As before, we write $D=K_1\cup K_2\cup\cdots \cup K_n$, here $K_1, K_2, \cdots, K_n$ denote the diagrams of the components. Let $D'$ be the diagram of $K_1\cup K_2\cup\cdots \cup K_{n-1}$, with $w$ crossing points. We use $u$ and $v$ to denote the number of crossing points of $K_n\cap K_n$ and the number of crossing points of $K_n\cap D'$ respectively. Therefore by induction the rank of $M(D')$ is $w-n+2$, we want to show the rank of $M(D)$ equals to $u+v+w-n+1$, note that $u+v+w=c$ at present.

The first step, we show that $c-n+1$ is an upper bound for the rank of $M(D)$. Let $P_{\alpha_{ij}}$ denote all the crossing points of $K_i\cap K_j$ $(1\leq i, j\leq n)$, similar to the proof of the sufficient part of Proposition 3.1, for any $1\leq i\leq n$ we have $\sum\limits_{k\in\alpha_{ij}, j\neq i} c_k=\begin{bmatrix}
 0 \\
\end{bmatrix}_{(c+2)\times 1}.$ Note that for any $1\leq i\leq n$, there exists $j\in \{1, \cdots, \hat{i}, \cdots, n\}$ such that $K_i\cap K_j\neq \emptyset$, hence $\alpha_{ij}\neq \emptyset$. It means that now we have $n$ equations. However, it is not difficult to find that the last equation $\sum\limits_{k\in\alpha_{nj}, j\neq n} c_k=\begin{bmatrix}
 0 \\
\end{bmatrix}_{(c+2)\times 1}$ can be obtained from $\sum\limits_{k\in\alpha_{ij}, j\neq i} c_k=\begin{bmatrix}
 0 \\
\end{bmatrix}_{(c+2)\times 1}$ $(1\leq i\leq n-1)$. For these $n-1$ equations, we claim that $\sum\limits_{k\in\alpha_{ij}, j\neq i} c_k$ $(1\leq i\leq n-1)$ are linearly independent, which implies that the rank of $M(D)\leq c-n+1$. Now we assume that $\sum\limits_{k\in\alpha_{ij}, j\neq i} c_k$ $(1\leq i\leq n-1)$ are linearly dependent, without loss of generality, we assume that $\sum\limits_{k\in\alpha_{tj}, j\neq t} c_k=\begin{bmatrix}
 0 \\
\end{bmatrix}_{(c+2)\times 1}$ can be obtained from $\sum\limits_{k\in\alpha_{ij}, j\neq i} c_k=\begin{bmatrix}
 0 \\
\end{bmatrix}_{(c+2)\times 1}$ $(1\leq i\leq n-1, i\neq t)$. Now we continue our discussion in two cases:
\begin{itemize}
\item If $K_t\cap K_n\neq \emptyset$, then $\alpha_{tn}\neq \emptyset$. However those columns $c_k (k\in\alpha_{tn})$ will not appear in $\sum\limits_{k\in\alpha_{ij}, j\neq i} c_k$ $(1\leq i\leq n-1, i\neq t)$. This is a contradiction.
\item If $K_t\cap K_n=\emptyset$. Without loss of generality, we assume for $1\leq i\leq s$, $K_i$ are those diagrams which satisfy $K_i\cap K_n\neq \emptyset$, and for $s+1\leq i\leq n-1$, $K_i\cap K_n=\emptyset$. Then it is easy to conclude that for any $1\leq i\leq s$, $\sum\limits_{k\in\alpha_{ij}, j\neq i} c_k$ are needless if we want to use $\sum\limits_{k\in\alpha_{ij}, j\neq i} c_k$ $(1\leq i\leq n-1, i\neq t)$ to express $\sum\limits_{k\in\alpha_{tj}, j\neq t} c_k$, since those columns $c_k$ $(k\in\alpha_{in})$ only appear in $\sum\limits_{k\in\alpha_{ij}, j\neq i} c_k$ itself. On the other hand, for the same reason, if we can obtain $\sum\limits_{k\in\alpha_{tj}, j\neq t} c_k$ from $\sum\limits_{k\in\alpha_{ij}, j\neq i} c_k$ $(s+1\leq i\leq n-1, i\neq t)$ $($without loss of generality, we suppose all these $n-s-2$ summations are necessary to express $\sum\limits_{k\in\alpha_{tj}, j\neq t} c_k$$)$, then $K_i\cap K_j=\emptyset$ if $1\leq i\leq s$ or $i=n$ and $s+1\leq j\leq n-1$. This contradicts the assumption that $D$ is non-split.
\end{itemize}

The second step, we want to use induction to show that $c-n+1$ is a lower bound for the rank of $M(D)$. Without loss of generality, we use $c_1, \cdots, c_u$ to denote those columns corresponding to $K_n\cap K_n$, use $c_{u+1}, \cdots, c_{u+v}$ to denote those columns corresponding to $K_n\cap D'$, and use $c_{u+v+1}, \cdots, c_{u+v+w}$ to denote the remainder. We divide $M(D)$ into nine submatrices as below:
\begin{center}
$\begin{bmatrix}
A_{u\times u} & B_{u\times v} & C_{u\times w} \\
D_{v\times u} & E_{v\times v} & F_{v\times w} \\
G_{(w+2)\times u} & H_{(w+2)\times v} & I_{(w+2)\times w}\\
\end{bmatrix}.$
\end{center}

Consider the $(u+v+w+2)\times w$ matrix
\begin{center}
$\begin{bmatrix}
 C_{u\times w} \\
 F_{v\times w} \\
 I_{(w+2)\times w}\\
\end{bmatrix}$.
\end{center}
Since $D=D'\cup K_n$, a region of $D'$ may be divided into several associated regions of $D$. With the viewpoint of matrix, this means we can use the row vectors of the $(u+v+w+2)\times w$ matrix above to construct a $(w+2)\times w$ matrix which is exactly $M(D')$. Because the rank of $M(D')$ is $w-n+2$, it follows that the rank of that $(u+v+w+2)\times w$ matrix is at least $w-n+2$. Hence there exists $w-n+2$ linearly independent column vectors from $c_{u+v+1}, \cdots, c_{u+v+w}$, without loss of generality, we call them $c_{u+v+1}, \cdots, c_{u+v+w-n+2}$.

According to Proposition 2.2, we can find some row vectors of $M(D)$ such that the sum of them is $(1, 0, \cdots, 0)$. Hence with some elementary row operations we can make the first row to be $(1, 0, \cdots, 0)$. This process will continue till the matrix $A_{u\times u}$ becomes an identity matrix and $B_{u\times v}=0$, $C_{u\times w}=0$. Next since $A_{u\times u}$ is an identity matrix, with some elementary row operations we can make $D_{v\times u}=0$ and $G_{(w+2)\times u}=0$.

Similarly, by Proposition 2.2, we can use some row vectors of $M(D)$ to construct a row vector $(0, \cdots, 0, 1, 0, \cdots, 0, 1, 0, \cdots, 0)$, the pair of 1's locate on the $i$-th column and the $j$-th column, where $u+1\leq i, j\leq u+v$. Since the new matrix is obtained from $M(D)$ by taking some elementary row operations, we still can find some row vectors from the new matrix to obtain the row vector above. It is evident that the first $u$ row vectors $r_1, \cdots, r_u$ are not necessary here, hence we can take some elementary row operations to make
\begin{center}
$E_{v\times v}=\begin{bmatrix}
 1 & 1 &   &  &   \\
   & 1 & 1 &  &   \\
   &   &\cdots&&  \\
   &   &  & 1 & 1 \\
 \ast  & \ast  & \cdots  & \ast  & \ast\\
\end{bmatrix}$ and $F_{v\times w}=\begin{bmatrix}
  & &   &  &   \\
   &  &  &  &   \\
   &   &&&  \\
   &   &  &  &  \\
  \ast &  \ast & \cdots  &  \ast & \ast\\
\end{bmatrix}$.
\end{center}
Hence with some more elementary row operations, we can make
\begin{center}
$E_{v\times v}=\begin{bmatrix}
 1 & 1 &   &  &   \\
   & 1 & 1 &  &   \\
   &   &\cdots&&  \\
   &   &  & 1 & 1 \\
   &   &   &   & \ast\\
\end{bmatrix}$ and $H_{(w+2)\times v}=\begin{bmatrix}
  &  &  &  & \ast \\
  &  &  &  & \ast \\
  &  &  &  & \vdots\\
  &  &  &  & \ast\\
  &  &  &  & \ast\\
\end{bmatrix}$.
\end{center}

Now consider the columns $\{c_1, \cdots, c_u, c_{u+1}, \cdots, c_{u+v-1}, c_{u+v+1}, \cdots, c_{u+v+w-n+2}\}$, obviously they are linearly independent. Hence the rank of $M(D)$ is at least $u+v+w-n+1$, since the crossing number $c=u+v+w$, therefore the rank of $M(D)\geq c-n+1$. Together with $M(D)\leq c-n+1$, we obtain the result of Theorem 1.4.
\end{proof}

\no
\vskip0.2in
\no {\bf References}
\vskip0.1in

\footnotesize
\REF{[1]}Hoste, J., Nakanishi, Y. and Taniyama, K., {\it Unknotting operations involving trivial tangles}. Osaka J. Math. 27, 555-566, 1990
\REF{[2]}H. Murakami, {\it Some metrics on classical knots}. Math. Ann. 270, 35-45, 1985
\REF{[3]}H. Murakami, Y. Nakanishi, {\it On a certain move generating link-homology}. Math. Ann. 284, 75-89, 1989
\REF{[4]}D. Rolfsen, {\it Knots and links}. Publish or Perish, Inc. 1976
\REF{[5]}Ayaka Shimizu, {\it Region crossing change is an unknotting operation}. math.GT/1011.6304v2, 2010
\REF{[6]}Junming Xu, {\it Theory and Application of Graphs}. Kluwer Academic Publishers, 2003
\end{document}